\definecolor{darkblue}{rgb}{0.0,0,0.7} 
\newtheorem{theorem}{Theorem}[section]
\newtheorem{corollary}[theorem]{Corollary}
\newtheorem{proposition}[theorem]{Proposition}
\theoremstyle{definition}
\numberwithin{equation}{section}
\definecolor{darkred}{rgb}{0.7,0,0} 
\title{Generalized Staircase Tableaux: Symmetry and Applications}
\author[G.~Hawkes]{Graham Hawkes}
\address[G. Hawkes]{Department of Mathematics, UC Davis, One Shields Ave., Davis, CA 95616-8633, U.S.A.}
\email{hawkes@math.ucdavis.edu}
\keywords{Staircase tableaux, shifted semistandard tableaux, fixed points, $Q$-Schur functions}
\begin{document}

\begin{abstract}
We define a number of related combinatorial objects, each of which possesses a surprising symmetry. We include several applications such as a combinatorial explanation for certain fixed points of the involution $\omega$ on the ring of symmetric functions, as well as a relationship between certain skew Schur functions and skew $Q$-Schur functions.  We give a $t$-deformation of these $Q$-Schur functions, and show that it is Schur positive, including a combinatorial description of the Schur coefficients.  A corollary of our results is the equality of skew $Q$-Schur functions:  $Q_{\lambda+\delta/\mu + \delta}=Q_{\lambda'+\delta/\mu' + \delta}$ for $\mu \subseteq \lambda$ and $\delta=(n,\ldots,1)$ for some $n > l(\lambda)$.
\end{abstract}

\maketitle

\section{Introduction}

The ring of symmetric functions, $\Lambda$, has a $\mathbb{Z}$-basis composed of Schur functions. Hence we can define an invertible linear operator $\omega$, by the formula $\omega(s_{\lambda})=s_{\lambda'}$.  We will call $f \in \Lambda$ a fixed point of $\omega$ if $\omega(f)=f$.  Clearly $s_{\lambda}$ is a fixed point for any self-conjugate partition $\lambda$.   Moreover, one can show that $\omega(s_{\lambda/\mu})=s_{\lambda'/\mu'}$, \cite{Stanley.1999} meaning that $s_{\lambda/\mu}$ is a fixed point for any self-conjugate partitions $\mu \subseteq \lambda$.  In particular $s_{\delta/\mu}$, where $\delta= (n,n-1,\ldots 1)$ is a fixed point for any self-conjugate $\mu \subseteq \delta$.  A priori there is little reason to expect that for any $\mu \subseteq \delta$ (not necessarily self-conjugate) $s_{\delta/\mu}$ would still be a fixed point. The fact that $\delta$ is self-conjugate and $\omega(s_{\lambda/\mu})=s_{\lambda'/\mu'}$ means that the statement above is equivalent to $s_{\delta/\mu}=s_{\delta/\mu'}$. However, this will be an immediate consequence of the symmetry of generalized staircase tableaux.

Besides being a fixed point of $\omega$, the function $s_{\delta/\mu}$ is interesting in its relation to shifted Schur functions.  For one, it is known, \cite{Ardilla.Serrano.2012} that $s_{\delta/\mu}$ is $P$-Schur positive.  We do not recreate this result here but do obtain the result that $s_{\delta/\mu}(x_1,x_1,x_2,x_2,\ldots)$ is $Q$-Schur positive.  (Note that $P$-Schur positivity is immediately guaranteed as a corollary of the result of \cite{Ardilla.Serrano.2012}, but not necessarily $Q$-Schur positivity.)  In particular, we derive that $s_{\delta/\mu}(x_1,x_1,x_2,x_2,\ldots)$ is equal to a certain skew $Q$-Schur function, from which the result follows by \cite{Worley.1984}, or \cite{Stembridge.1989}.

Our last application is deriving the equality of the skew $Q$-Schur functions $Q_{\lambda+\delta/\mu + \delta}=Q_{\lambda'+\delta/\mu' + \delta}$.  On the way to accomplishing this we define a $t$-deformation of $Q$-Schur functions and show that, in certain cases, it is symmetric and Schur positive, and give a combinatorial interpretation of the Schur coefficients.  This is accomplished with the help of a certain crystal structure introduced in \cite{Hawkes.Paramonov.Schilling.2017}.  We then prove that certain permutations to the order of the alphabet $1'<1<2'<2\cdots$,  which is typically used to define shifted semistandard tableaux (e.g. \cite{Serrano.2009}), may be made when the partition does not touch the diagonal.

Lastly, we note that the name generalized staircase tableaux (GST) is a bit deceptive, as we will define them for arbitrary shapes.  However, non-staircase GST are simply used to aid in our proofs, and no interesting results involve them.


\section{Definitions}
In what follows, we fix some $n$, and write $\delta=\delta=(n,n-1,\ldots,1)$.  Any partition denoted by $\mu$ which appears henceforth will be assumed to satisfy $\mu \subseteq \delta$.  Moreover, whenever $\lambda$ is mentioned we will assume also that $\mu \subseteq \lambda$ and that $l(\lambda)\leq n$.

A generalized staircase tableau (GST) of shape $\lambda/\mu$ and set $I \subseteq \mathbb{N}$ is a filling of the Young diagram $\lambda/\mu$ with natural numbers such that:

\begin{enumerate}
\item{The rows and columns are weakly increasing.}
\item{If $i \in I$ then each row has at most one $i$.}
\item{If $i \notin I$, then each column has at most one $i$.}
\end{enumerate}

Let $\textbf{G}(\lambda/\mu,I)$ denote the set of all GST of shape $\lambda/\mu$ and set $I$.  For instance,  $\textbf{G}(\lambda/\mu,\emptyset)$ is the set of skew semistandard Young tableau of shape ${\lambda/\mu}$, and the set $\textbf{G}(\lambda/\mu,2\mathbb{N}-1)$ is in in bijection with the set of skew shifted semi-standard tableaux of shape $(\delta+\lambda)/(\delta+\mu)$.

Suppose $T$ is a GST of shape $\lambda/\mu$ .   We will denote the box in row $i$ and column $j$ by $b_{ij}$.  We will denote the value inside $b_{ij}$ by $c(b_{ij})$ or by the ``content of $b_{ij}$."  For indexing purposes we will allow the coordinates of $i$ and $j$ to be any non-negative integers, although box $b_{ij}$ is always empty whenever $i=0$ or $j=0$.  We define the content of these border boxes to be $-\infty$. We also define an empty box inside of $\mu$ to have content $-\infty$.   On the other hand, an empty box outside of $\lambda$ (and with $i$ and $j$ positive) is defined to have content $\infty$. The weight of $T$, denoted by $wt(T)$ is defined to be the vector whose $i^{th}$ coordinate is equal to the number of $i$s appearing in $T$.

Suppose that $T$ is a GST of shape $\lambda/\mu$ and that $b_{ij} \in \mu$ such that $\mu-\{b_{ij}\}$ is a partition.  We define \emph{forward jdt} into $b_{ij}$ as follows.
\begin{enumerate}
\item{ Between the boxes  $b_{i(j+1)}$  and $b_{(i+1)j}$ select the box whose content is lesser.  If the contents are equal, then select $b_{i(j+1)}$ if $c(b_{i(j+1)}) =  c(b_{(i+1)j}) \in I$  and $b_{(i+1)j}$ otherwise.}
\item{If an empty box was selected, the algorithm terminates.  Otherwise move the content of the selected box into $b_{ij}$.}
\item{Re-index so that the newly emptied box is $b_{ij}$ and return to step 1.}
\end{enumerate}
Similarly, we define \emph{reverse jdt} into a box $b_{ij} \notin \lambda$ such that $\lambda \cup \{b_{ij}\}$ is a partition as follows:
\begin{enumerate}
\item{ Between the boxes  $b_{i(j-1)}$  and $b_{(i-1)j}$ select the box whose content is greater.  If the contents are equal, then select $b_{i(j-1)}$ if $c(b_{i(j-1)}) =  c(b_{(i-1)j}) \in I$  and $b_{(i-1)j}$ otherwise.}
\item{If an empty box was selected, the algorithm terminates.  Otherwise move the content of the selected box into $b_{ij}$.}
\item{Re-index so that the newly emptied box is $b_{ij}$ and return to step 1.}
\end{enumerate}

Note that in both cases a valid GST is returned.  Moreover, our jdt satisfies the familiar properties of classic jdt:

\begin{enumerate}
\item [J1]  {If $T'$ is obtained from $T$ by \emph{forward jdt} into $b_{ij}$, and $b_{i'j'}$ is the last box to be emptied, then $T$ can be obtained from $T'$ by \emph{reverse jdt} into $b_{i'j'}$.}
\item[J2]{If $T'$ is obtained from $T$ by \emph{reverse jdt} into $b_{ij}$, and $b_{i'j'}$ is the last box to be emptied, then $T$ can be obtained from $T'$ by \emph{forward jdt} into $b_{i'j'}$.}
\item[J3]{If $i \geq k$ and $j<l$ and it is possible to \emph{forward jdt} into $b_{kl}$ and then \emph{forward jdt} into $b_{ij}$, and the boxes emptied by doing this are (in order) $b_{k'l'}$  and $b_{i'j'}$   then $i' \geq k'$ and $j'<l'$.}

\item[J4]{If $i < k$ and $j\geq l$ and it is possible to \emph{forward jdt} into $b_{kl}$ and then \emph{forward jdt} into $b_{ij}$, and the boxes emptied by doing this are (in order) $b_{k'l'}$  and $b_{i'j'}$   then $i' < k'$ and $j'\geq l'$.}
\item[J5]{If $i \geq k$ and $j<l$ and it is possible to \emph{reverse jdt} into $b_{ij}$ and then \emph{reverse jdt} into $b_{kl}$, and the boxes emptied by doing this are (in order) $b_{i'j'}$  and $b_{k'l'}$   then $i' \geq k'$ and $j'<l'$.}
\item[J6]{If $i < k$ and $j\geq l$ and it is possible to \emph{reverse jdt} into $b_{ij}$ and then \emph{reverse jdt} into $b_{kl}$, and the boxes emptied by doing this are (in order) $b_{i'j'}$  and $b_{k'l'}$   then $i' < k'$ and $j'\geq l'$.}

\end{enumerate}

\section{Results}

\begin{theorem}\label{main}
 If $I$ and $I'$ are any subsets of the natural numbers, there is a weight preserving bijection from $\textbf{G}(\delta/\mu,I)$ to $\textbf{G}(\delta/\mu,I')$.
\end{theorem}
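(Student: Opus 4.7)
The plan is to reduce the theorem, by induction on $|I\triangle I'|$, to constructing for each $i\in\mathbb{N}$ a weight-preserving involution
\[
\iota_i : \mathbf{G}(\delta/\mu, I)\to \mathbf{G}(\delta/\mu, I\triangle\{i\}),
\]
so that the general bijection is the composition of $|I\triangle I'|$ such toggles in any fixed order. The base case $I=I'$ is trivial, and composing well-defined toggles gives the conclusion.

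It is convenient to encode $T\in\mathbf{G}(\delta/\mu,I)$ as a chain of partitions $\mu=\nu_0\subseteq\nu_1\subseteq\cdots\subseteq\nu_N=\delta$, where $\nu_j$ is the shape occupied by entries $\leq j$. The GST conditions become: each step $\nu_j/\nu_{j-1}$ is a horizontal strip when $j\notin I$ and a vertical strip when $j\in I$, and the theorem is equivalent to saying that the number of strip-chains from $\mu$ to $\delta$ with prescribed step sizes is independent of which steps are horizontal and which are vertical. For $\iota_i$ I would therefore replace only $\nu_i$ by a new shape $\tilde\nu_i$ with $\nu_{i-1}\subseteq\tilde\nu_i\subseteq\nu_{i+1}$ and $|\tilde\nu_i|=|\nu_i|$, chosen so that $\tilde\nu_i/\nu_{i-1}$ has the opposite strip type from $\nu_i/\nu_{i-1}$ while $\nu_{i+1}/\tilde\nu_i$ keeps the same strip type as $\nu_{i+1}/\nu_i$.

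The construction of $\iota_i$ would be built from the jdt machinery of Section~2. Starting from the cells labeled $i$, I would reverse-slide each of them out to the boundary of $\nu_i$ and then re-absorb them via forward-slides run under the tie-breaking rule for $I\triangle\{i\}$; properties J1--J2 give the invertibility of individual slides, and J3--J6 control how successive slides interact so that the result is still a valid partition chain. The tie-breaking rules in the jdt definitions explicitly reference membership in $I$, and it is precisely this feature that lets one pass from one strip type to the other and back in an invertible way. The main obstacle is global consistency: one must verify that the toggle at level $i$ does not disturb strip types at other levels, and that the outer shape remains $\delta$ throughout. This is where the self-conjugacy of the staircase is essential — the analogous statement fails for generic $\lambda$ — so the proof must exploit this feature to guarantee that the paths of slid cells terminate on the boundary of $\delta$ rather than extending beyond it.
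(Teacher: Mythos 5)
There is a genuine gap, and it lies exactly where you place the weight of the argument: the local toggle $\iota_i$ as you specify it does not exist. You ask to replace \emph{only} $\nu_i$ by some $\tilde\nu_i$ with $\nu_{i-1}\subseteq\tilde\nu_i\subseteq\nu_{i+1}$, $|\tilde\nu_i|=|\nu_i|$, with $\tilde\nu_i/\nu_{i-1}$ of the opposite strip type and $\nu_{i+1}/\tilde\nu_i$ of the same type as before, keeping all other shapes in the chain fixed. Take $n=3$, $\delta=(3,2,1)$, $\mu=\emptyset$, $I=\emptyset$, $I'=\{1\}$, and the tableau with first row $1\,1\,3$, second row $2\,2$, third row $3$. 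Its chain is $\emptyset\subseteq(2)\subseteq(2,2)\subseteq(3,2,1)$. Toggling $i=1$ would require $\tilde\nu_1$ a vertical strip of size $2$ inside $(2,2)$, forcing $\tilde\nu_1=(1,1)$; but then $(2,2)/(1,1)$ is a vertical domino, not the horizontal strip that the (unchanged) condition on the letter $2$ demands. So no admissible $\tilde\nu_1$ exists, and indeed every element of $\mathbf{G}(\delta,\{1\})$ of weight $(2,2,2)$ has $\nu_2=(3,1)$, not $(2,2)$. Any correct bijection must disturb the intermediate shapes at other levels; a Bender--Knuth-style move confined to one level cannot work, even over the staircase.

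The paper's proof shares your first reduction (toggle one letter at a time, and in fact reduce further to the letter $1$ by freezing all entries $<i$), but its construction is global in precisely the way your plan forbids: erase all the $1$s (a horizontal strip of holes), \emph{forward jdt} into these holes from right to left so that the vacated cells appear on the outer boundary of $\delta$ — here the staircase is used, since a horizontal strip on the boundary of $\delta$ is automatically also a vertical strip — then \emph{reverse jdt} back into those boundary cells from top to bottom, and fill the resulting inner vertical strip of holes with $1$s. Properties J3 and J6 control the interaction of successive slides, and invertibility is obtained not by inverting slide-by-slide but by conjugating the same map with transposition, $\phi^{-1}(T)=\phi(T^t)^t$. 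Your sketch gestures at jdt but misattributes the mechanism: the change of strip type comes from the staircase boundary and the change of insertion order, not from switching the tie-breaking rule to that of $I\triangle\{i\}$ (after the $1$s are erased, only entries $\geq 2$ move and their tie-breaking is unchanged). As written, the proposal both relies on a local structure that provably fails and leaves the actual construction — the heart of the theorem — unbuilt.
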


\begin{proof}
It suffices to show that for any $I$ and any $i \notin I$ there is a weight preserving bijection from  $\textbf{G}(\delta/\mu,I)$ to  $\textbf{G}(\delta/\mu,I \cup i)$.  Let $\mu \subseteq \nu \subseteq \lambda$, be the partition consisting of all boxes of $\mu$ and all boxes of $\lambda$ with content less than $i$.  It will suffice to find a weight-preserving bijection from $\textbf{G}(\delta/\nu, I_{\geq i})$ to  $\textbf{G}(\delta/\nu,I_{\geq i}\cup i)$ or equivalently from $\textbf{G}(\delta/\nu, J)$ to  $\textbf{G}(\delta/\nu,J \cup 1)$ where $J=[(-i+1)+I]_{\geq 1}$.

We therefore assume that in the statement of the theorem, $i=1 \notin I$, and  $I'=I\cup 1$. Let $T \in \textbf{G}(\delta/\mu,I)$.  First erase all of the 1s which appear in $T$.  The result is a horizontal strip of empty boxes on the inside of the tableau.  Now, \emph{forward jdt} into each one of these empty boxes starting with the rightmost and moving left.  By property J3, the boxes which are emptied along the outside of the tableau will form a horizontal strip, and they will be emptied from right to left.  However, since $\delta$ is the staircase shape, this strip is in fact also a vertical strip.  Now, \emph{reverse jdt} into each of the boxes of this vertical strip, starting with the highest and moving down.  By property J6, the boxes emptied along the inside of the tableau will form a vertical strip, and they will be emptied starting with the highest and moving down.  Put a 1 into each of the newly emptied boxes.  This produces a tableau in $\textbf{G}(\delta/\mu,I \cup 1)$ which we define to be $\phi(T)$.  Now, given any $T \in \textbf{G}(\delta/\mu,I \cup 1)$ define $\phi^{-1}(T)=\phi(T^t)^t$, where the superscript $t$ stands for row/column transposition.  It is not hard to check that $\phi^{-1}(T) \in \textbf{G}(\delta/\mu,I)$ and that both $\phi \circ \phi^{-1}=Id$ and $\phi^{-1} \circ \phi=Id$. \\

\end{proof}

\begin{theorem}
 $s_{\delta/\mu}$ is a fixed point of the involution $\omega$. 
\end{theorem}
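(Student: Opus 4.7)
The plan is to derive this as a quick corollary of Theorem \ref{main}. First I would reduce the statement to the combinatorial identity $s_{\delta/\mu} = s_{\delta/\mu'}$. This reduction uses only the known fact $\omega(s_{\lambda/\nu}) = s_{\lambda'/\nu'}$ (cited in the introduction) together with $\delta' = \delta$: if $s_{\delta/\mu} = s_{\delta/\mu'}$, then $\omega(s_{\delta/\mu}) = s_{\delta'/\mu'} = s_{\delta/\mu'} = s_{\delta/\mu}$.

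To prove $s_{\delta/\mu} = s_{\delta/\mu'}$, I would interpret each side as a weighted sum over a set of GST. The function $s_{\delta/\mu}$ is the weight generating function of $\textbf{G}(\delta/\mu,\emptyset)$, since GST with $I = \emptyset$ are exactly skew semistandard Young tableaux, as noted immediately after the definition. For $s_{\delta/\mu'}$, I would use a transposition bijection: the elements of $\textbf{G}(\delta/\mu,\mathbb{N})$ are fillings of $\delta/\mu$ in which every row has at most one of each $i$ and every column is weakly increasing, i.e.\ rows strictly increasing and columns weakly increasing. Transposing such a filling yields an SSYT of shape $(\delta/\mu)' = \delta/\mu'$ (using $\delta' = \delta$), and this transposition clearly preserves the weight. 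Hence the weight generating function of $\textbf{G}(\delta/\mu,\mathbb{N})$ is $s_{\delta/\mu'}$.

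Combining the two identifications with Theorem \ref{main} applied to the sets $I = \emptyset$ and $I' = \mathbb{N}$ gives a chain of weight-preserving bijections
\[
\textbf{G}(\delta/\mu,\emptyset) \;\longleftrightarrow\; \textbf{G}(\delta/\mu,\mathbb{N}) \;\longleftrightarrow\; \mathrm{SSYT}(\delta/\mu'),
\]
whose generating functions yield $s_{\delta/\mu} = s_{\delta/\mu'}$, completing the proof. There is no real obstacle here; the only thing to be careful about is verifying that $I = \mathbb{N}$ in the GST conditions really does force the fillings to be transposes of SSYT, and that $\delta$ being self-conjugate is what lets us identify the transpose shape $(\delta/\mu)'$ with $\delta/\mu'$.
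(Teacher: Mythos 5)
Your proposal is correct and follows essentially the same route as the paper: reduce to $s_{\delta/\mu}=s_{\delta/\mu'}$ via $\omega(s_{\lambda/\nu})=s_{\lambda'/\nu'}$ and $\delta'=\delta$, then identify the two sides with the generating functions of $\textbf{G}(\delta/\mu,\emptyset)$ and $\textbf{G}(\delta/\mu,\mathbb{N})$ and invoke Theorem \ref{main}. You merely spell out the transposition argument identifying $\textbf{G}(\delta/\mu,\mathbb{N})$ with SSYT of shape $\delta/\mu'$, which the paper leaves implicit.
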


\begin{proof}
By the comments in the introduction, this is equivalent to showing that $s_{\delta /\mu}=s_{\delta /\mu'}$.  But this equality is equivalent to the equality:
\begin{eqnarray*}
\sum_{T \in \mathbf{G}(\delta /\mu,\emptyset)}\mathbf{x}^{wt(T)}=\sum_{T \in \mathbf{G}(\delta /\mu, \mathbb{N})}\mathbf{x}^{wt(T)},
\end{eqnarray*}
which is true because $\mathbf{G}(\delta /\mu,\emptyset)$ and $\mathbf{G}(\delta /\mu,\mathbb{N})$ are in weight preserving bijection.

\end{proof}

  Our next application relates Schur and $Q$-Schur functions of certain shapes:  For our purposes we will define a $Q$-tableau  to be a filling of the shape  $\lambda/\mu$ using letters from the ordered alphabet $1'<1<2'<2\ldots$ such that:

\begin{enumerate}
\item{The rows and columns are weakly increasing.}
\item{No primed number appears more than once in any row.}
\item{No unprimed number appears more than once in any column.}
\end{enumerate}

 We define the reading word of a $Q$-tableau to be the word obtained by reading the  primed entries in $T$ down columns from right to left and then reading the the unprimed entries left to right across rows, starting with the lowest row and working up.  We consider this word as a word in the alphabet $\{1,2,3,\ldots\}$ by ignoring the primes which appear above the entries at the beginning of the word.  This is the same definition as for the reading word of ``primed tableaux" in \cite{Hawkes.Paramonov.Schilling.2017}, except that here, our tableaux are not shifted.
 
We define a  function:
\begin{eqnarray*}
 Q_{\lambda/\mu }^{tr}=\sum_T \mathbf{x}^{wt(T)}t^{P(T)}r^{U(T)},
\end{eqnarray*}
where the sum is over all $Q$-tableaux of shape $\lambda/\mu$, where $wt(T)$ is the vector whose $i^{th}$ coordinate counts the number of times either $i$ or $i'$ appears in $T$, and where $P(T)$ (resp. $U(T)$) counts the number of times a primed (resp. unprimed) entry appears in $T$.  Notice that, by definition,  $Q^{tr}_{\lambda/\mu}$ at $t=1=r$ is the $Q$-Schur function $Q_{\lambda+\delta/\mu + \delta}$.

\begin{theorem}
\begin{eqnarray*}
Q_{\lambda/\mu }^{tr}=\sum_{k}\Bigg(\sum_{\nu} c_{\lambda/\mu}^{\nu,k}s_{\nu}\Bigg)t^kr^{|\lambda|-|\mu|-k}
\end{eqnarray*}
Where $c_{\lambda/\mu}^{\nu,k}$ is the number of $Q$-tableau of shape ${\lambda/\mu}$ and weight $\nu$ which have exactly $k$ of their entries primed, and whose reading word is Yamanouchi.
\end{theorem}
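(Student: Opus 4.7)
The plan is to prove the theorem one coefficient of $t^kr^{|\lambda|-|\mu|-k}$ at a time: fix $k$, let $\mathbf{Q}^{(k)}_{\lambda/\mu}$ denote the set of $Q$-tableaux of shape $\lambda/\mu$ with exactly $k$ primed entries, and show that
\begin{equation*}
\sum_{T \in \mathbf{Q}^{(k)}_{\lambda/\mu}} \mathbf{x}^{wt(T)} = \sum_{\nu} c_{\lambda/\mu}^{\nu,k} s_\nu.
\end{equation*}
Since the right-hand sides over all $k$ sum to $Q_{\lambda/\mu}^{tr}$ by definition, this suffices.

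First, I would equip $\mathbf{Q}^{(k)}_{\lambda/\mu}$ with the crystal operators $e_i,f_i$ from \cite{Hawkes.Paramonov.Schilling.2017}. Those operators were defined on shifted \emph{primed tableaux} via exactly the same alphabet $1'<1<2'<2<\cdots$ and exactly the same reading word convention that we use here, so the definition transports verbatim to the non-shifted setting. The bracketing procedure that defines $f_i$ (and dually $e_i$) acts on a single box of $T$, either by incrementing an unprimed $i$ to an unprimed $i{+}1$, by changing the associated primed letter $i'$ to $(i{+}1)'$, or by swapping a primed/unprimed pair; in each case the total number of primed entries is preserved, which is the key reason the decomposition by $k$ is respected.

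Second, I would verify that the operators land back in $\mathbf{Q}^{(k)}_{\lambda/\mu}$: the shape is unchanged (only entries are relabeled), the total weight shifts by $\epsilon_{i+1}-\epsilon_i$, and the primed-count is preserved by the above observation. The remaining content is to check the three $Q$-tableau constraints (weakly increasing rows and columns; no repeated $i'$ in a row; no repeated $i$ in a column). This is precisely the routine compatibility check carried out for the shifted situation in \cite{Hawkes.Paramonov.Schilling.2017}, and since ordinary (non-shifted) shapes impose strictly fewer local constraints than shifted shapes—in particular, there are no diagonal cells to worry about—the verification is no harder here.

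With the crystal in hand, the rest follows from standard type $A$ crystal theory: each connected component is isomorphic to a highest-weight $\mathfrak{sl}_\infty$ crystal, whose character is $s_\nu$ where $\nu$ is the weight of the (unique) source vertex $T_0$, i.e. the element with $e_i(T_0)=0$ for all $i$. A completely standard argument using the reading word (run identically to the one for ordinary semistandard tableaux, since crystal operators act through the reading word) shows $e_i(T)=0$ for all $i$ iff the reading word of $T$ is Yamanouchi. Summing characters over components then gives the claimed formula, and Schur-positivity and symmetry are automatic. The main obstacle I expect is the second step: carefully confirming that the operators of \cite{Hawkes.Paramonov.Schilling.2017}, tuned to shifted primed tableaux, preserve the non-shifted $Q$-tableau conditions case by case; once this is done the rest is formal.
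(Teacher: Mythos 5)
Your proposal is correct and follows essentially the same route as the paper: equip the $Q$-tableaux with the crystal operators of \cite{Hawkes.Paramonov.Schilling.2017}, identify highest-weight elements with Yamanouchi reading words, and observe that the prime count is constant on components because no diagonal boxes are involved. The only (cosmetic) difference is that the paper imports well-definedness and the highest-weight description by canonically identifying $Q$-tableaux of shape $\lambda/\mu$ with skew primed tableaux of the shifted shape $\lambda+\delta/\mu+\delta$, rather than re-verifying the tableau conditions directly on the non-shifted shape as you propose.
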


\begin{proof}
The crystal operators on primed tableau given in \cite{Hawkes.Paramonov.Schilling.2017} induce crystal operators on skew primed tableaux in the natural way.   Notice that the set of skewed primed tableaux of shape ${\lambda+\delta/\mu + \delta}$ is cannonically equivalent to the set of all $Q$-tableaux of shape $\lambda/\mu$, and so we obtain a crystal structure on the latter.  Moreover, when the set of $Q$-tableaux of shape $\lambda/\mu$ inherits this structure, the highest weight elements of this crystal will be those $Q$-tableaux whose reading word is Yamanouchi. This follows directly from the description of highest weight primed tableaux given in \cite{Hawkes.Paramonov.Schilling.2017}. In order to prove the theorem, it remains to show that $P(T)$ is constant on connected components of the induced crystal on $Q$-tableaux. However, one may check that the crystal operator $f_i$ in \cite{Hawkes.Paramonov.Schilling.2017} preserves the number of primes in a given primed tableau whenever this tableau has no $is$ or $(i+1)$s on the diagonal.  However, note that we are associating $Q$-tableaux of shape $\lambda/\mu$ to primed tableaux of shifted skew shape  ${\lambda+\delta/\mu + \delta}$, and that the latter shape has no boxes on the diagonal. Thus, it is the case that for all $i$, we are always applying the operator $f_i$ to a (skew) primed tableaux with no $i$s or $(i+1)$s on the diagonal.  Thus, the induced operators on $Q$-tableaux also preserve the number of primes in a given $Q$-tableau.
\end{proof}

\begin{theorem}
\begin{eqnarray*}
s_{\delta/\mu }(tx_1,rx_1,tx_2,rx_2,\ldots)=Q_{\delta/\mu }^{tr}=\sum_{k}\Bigg(\sum_{\nu} c_{\delta/\mu}^{\nu,k}s_{\nu}\Bigg)t^kr^{|\delta|-|\mu|-k}
\end{eqnarray*}
In particular, $s_{\delta/\mu }(x_1,x_1,x_2,x_2,\ldots)$ is $Q$-Schur positive (since it is equal to  the skew $Q$-Schur function $Q_{\lambda+\delta/\mu + \delta}$ which is $Q$-Schur positive by \cite{Worley.1984} or \cite{Stembridge.1989}), and we have that $s_{\delta/\mu }(tx_1,x_1,tx_2,x_2,\ldots)$ is Schur positive.
\end{theorem}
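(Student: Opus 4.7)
The theorem packages three claims. The second equality is the preceding theorem specialized to $\lambda=\delta$, so it is already established. The two Schur-positivity statements are immediate from the second equality by specialization ($r=1$ gives a non-negative $\mathbb{Z}[t]$-combination of Schur functions, and setting additionally $t=1$ recovers $Q_{\delta+\delta/\mu+\delta}$, whose $Q$-Schur positivity is cited). Hence the real work is the first equality
\[
s_{\delta/\mu}(tx_1,rx_1,tx_2,rx_2,\ldots) = Q_{\delta/\mu}^{tr},
\]
and my plan is to deduce it directly from Theorem~\ref{main}.

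I would expand both sides as weighted sums over combinatorial objects and exhibit a weight-preserving bijection. Setting $y_{2i-1}=tx_i$ and $y_{2i}=rx_i$ in $s_{\delta/\mu}(y_1,y_2,\ldots)=\sum_T \mathbf{y}^{wt(T)}$, the left-hand side is a sum over SSYT of shape $\delta/\mu$ in the alphabet $1<2<3<\cdots$. Relabeling the letters by $2i-1\mapsto i'$ and $2i\mapsto i$ turns these into fillings of $\delta/\mu$ in the doubled alphabet $1'<1<2'<2<\cdots$ with rows weakly increasing and columns strictly increasing; column-strictness in the doubled alphabet is exactly the condition that every letter (primed or unprimed) appears at most once in each column. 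Under the identification, these fillings are precisely $\mathbf{G}(\delta/\mu,\emptyset)$, and the weight $\mathbf{y}^{wt(T)}$ becomes $\mathbf{x}^{wt(T)}\,t^{P(T)}r^{U(T)}$, matching the weight in the definition of $Q_{\delta/\mu}^{tr}$. On the other side, a $Q$-tableau of shape $\delta/\mu$ — primes row-restricted, unprimeds column-restricted — corresponds under the same identification to an element of $\mathbf{G}(\delta/\mu,2\mathbb{N}-1)$, again with the same $\mathbf{x}^{wt}t^{P}r^{U}$ weight.

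At this point Theorem~\ref{main} immediately supplies a weight-preserving bijection $\mathbf{G}(\delta/\mu,\emptyset)\longleftrightarrow \mathbf{G}(\delta/\mu,2\mathbb{N}-1)$. Because that bijection preserves the entire vector $wt(T)$ (the count of each individual natural number in the filling), it automatically preserves the partial sums $P(T)=\sum_i wt(T)_{2i-1}$ and $U(T)=\sum_i wt(T)_{2i}$, and hence the full monomial $\mathbf{x}^{wt(T)}t^{P(T)}r^{U(T)}$. Summing over tableaux on both sides gives the desired equality, and combining with the previous theorem finishes the proof.

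The only real obstacle I anticipate is the bookkeeping in setting up the identifications on both sides simultaneously: I must confirm that column strictness in the ordinary alphabet $1<2<\cdots$ really translates, after doubling and relabeling, to ``each of the letters $i'$ and $i$ appears at most once per column'' (putting the left-hand side into $\mathbf{G}(\delta/\mu,\emptyset)$ rather than some other GST set), and that the $Q$-tableau conditions match the $I=2\mathbb{N}-1$ conditions under the same relabeling. Once these identifications are correctly aligned, the theorem reduces cleanly to an application of Theorem~\ref{main}.
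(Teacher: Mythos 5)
Your proposal is correct and follows essentially the same route as the paper: rewrite $s_{\delta/\mu}(tx_1,rx_1,\ldots)$ as a weighted sum over SSYT $=\mathbf{G}(\delta/\mu,\emptyset)$, transport it to $\mathbf{G}(\delta/\mu,2\mathbb{N}-1)$ via the weight-preserving bijection of Theorem~\ref{main} (which preserves $P$ and $U$ since it preserves the full weight vector), relabel $2i-1\mapsto i'$, $2i\mapsto i$ to identify the result with $Q^{tr}_{\delta/\mu}$, and obtain the Schur expansion and positivity statements from the preceding theorem. The only nitpick is cosmetic: the elements of $\mathbf{G}(\delta/\mu,\emptyset)$ are fillings by natural numbers, so the doubled-alphabet relabeling is best viewed as applied only on the $Q$-tableau side rather than to both sides, but this does not affect the argument.
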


\begin{proof}

\begin{eqnarray*}
s_{\delta/\mu }(tx_1,rx_1,tx_2,rx_2,\ldots)=\sum_T \mathbf{x}^{wt(T)}t^{P(T)}r^{|\delta|-|\mu|-P(T)},
\end{eqnarray*}
where we claim the sum can be taken over any of the following:
\begin{enumerate}
\item{ Over all SSYT of shape $\delta/\mu$, where $wt(T)$ is the vector whose $i^{th}$ coordinate counts the number of times either $2i-1$ or $2i$ appears in $T$, and where $P(T)$ counts the number of times an odd entry appears in $T$.}
\item{ Over $\mathbf{G}(\delta/\mu,\emptyset)$, where $wt(T)$ is the vector whose $i^{th}$ coordinate counts the number of times either $2i-1$ or $2i$ appears in $T$, and where $P(T)$ counts the number of times an odd entry appears in $T$.}
\item{ Over $\mathbf{G}(\delta/\mu,2\mathbb{N}-1)$, where $wt(T)$ is the vector whose $i^{th}$ coordinate counts the number of times either $2i-1$ or $2i$ appears in $T$, and where $P(T)$ counts the number of times an odd entry appears in $T$.}
\item {Over all $Q$-tableaux of shape $\delta/\mu$, where $wt(T)$ is the vector whose $i^{th}$ coordinate counts the number of times either $i$ or $i'$ appears in $T$, and where $P(T)$ counts the number of times a primed entry appears in $T$.}
\end{enumerate}

(1) is true by definition.  (1)$\implies$ (2) by the definition of GST.  (2) $\implies$ (3) by \ref{main}.  (3) $\implies$ (4) by relabeling the alphabet, and (4) corresponds to the statement in the theorem.

\end{proof}

\begin{corollary}
$Q_{\delta/\mu }^{tr}$ is symmetric in $t$ and $r$, and $Q_{\delta/\mu }^{tr}=Q_{\delta/\mu' }^{tr}$.  In particular, we have the equality of skew $Q$-Schur functions $Q_{\delta+\delta/\mu + \delta}=Q_{\delta+\delta/\mu' + \delta}$.

\end{corollary}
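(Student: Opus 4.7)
The plan is to deduce all three claims directly from the identity
\[
s_{\delta/\mu}(tx_1, rx_1, tx_2, rx_2, \ldots) = Q_{\delta/\mu}^{tr}
\]
established in the preceding theorem, combined with two general facts: that $s_{\delta/\mu}$ is a symmetric function of its variable list, and that $s_{\delta/\mu} = s_{\delta/\mu'}$ (the fixed-point-of-$\omega$ theorem proved earlier). So the corollary is essentially a formal consequence; there is no real obstacle.

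First I would handle symmetry in $t$ and $r$. The tuples $(tx_1, rx_1, tx_2, rx_2, \ldots)$ and $(rx_1, tx_1, rx_2, tx_2, \ldots)$ differ by the permutation that swaps positions $2i-1$ and $2i$ for each $i$. Since $s_{\delta/\mu}$ is a symmetric function in all of its arguments, evaluating it on either tuple produces the same result. The former yields $Q_{\delta/\mu}^{tr}$ by the identity above, while the latter yields $Q_{\delta/\mu}^{rt}$; hence $Q_{\delta/\mu}^{tr} = Q_{\delta/\mu}^{rt}$.

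Next I would handle $Q_{\delta/\mu}^{tr} = Q_{\delta/\mu'}^{tr}$. The earlier theorem establishing that $s_{\delta/\mu}$ is a fixed point of $\omega$ gives, as noted in the introduction, the equality $s_{\delta/\mu} = s_{\delta/\mu'}$ as elements of $\Lambda$. Evaluating both sides at $(tx_1, rx_1, tx_2, rx_2, \ldots)$ and applying the identity from the preceding theorem on each side produces
\[
Q_{\delta/\mu}^{tr} = s_{\delta/\mu}(tx_1, rx_1, \ldots) = s_{\delta/\mu'}(tx_1, rx_1, \ldots) = Q_{\delta/\mu'}^{tr}.
\]

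Finally, for the equality of skew $Q$-Schur functions, I would specialize $t = r = 1$. By the remark following the definition of $Q_{\lambda/\mu}^{tr}$, this specialization recovers $Q_{\lambda+\delta/\mu+\delta}$. Applied with $\lambda = \delta$ to the identity of the previous paragraph, this yields $Q_{\delta+\delta/\mu+\delta} = Q_{\delta+\delta/\mu'+\delta}$, completing the proof.
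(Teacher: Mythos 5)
Your proposal is correct and follows exactly the route the paper intends: the corollary is stated without a separate proof precisely because it is the formal consequence you describe of the identity $s_{\delta/\mu}(tx_1,rx_1,\ldots)=Q_{\delta/\mu}^{tr}$, the symmetry of $s_{\delta/\mu}$, the equality $s_{\delta/\mu}=s_{\delta/\mu'}$ from the fixed-point theorem, and the observation that $t=r=1$ recovers $Q_{\delta+\delta/\mu+\delta}$. No gaps; your handling of the variable swap $(tx_1,rx_1,\ldots)\mapsto(rx_1,tx_1,\ldots)$ and of $\mu'\subseteq\delta'=\delta$ is exactly what is needed.
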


In fact, more generally we have:

\begin{proposition}\label{tr}
$Q_{\lambda/\mu}^{tr}(\mathbf{x};t,r)=Q_{\lambda'/\mu'}^{tr}(\mathbf{x};r,t)$.
\end{proposition}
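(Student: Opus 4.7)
The plan is to produce a weight-preserving bijection $\Phi$ from $Q$-tableaux of shape $\lambda/\mu$ to $Q$-tableaux of shape $\lambda'/\mu'$ that swaps the statistics $P$ and $U$, i.e.\ with $P(\Phi(T)) = U(T)$ and $U(\Phi(T)) = P(T)$; summing $\mathbf{x}^{\wt(\cdot)}t^{P}r^{U}$ on the two sides then produces exactly the identity $Q_{\lambda/\mu}^{tr}(\mathbf{x};t,r) = Q_{\lambda'/\mu'}^{tr}(\mathbf{x};r,t)$. I would factor $\Phi$ into two steps: a transpose-and-swap operation, followed by an application of the alphabet-permutation lemma promised in the introduction.

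First I would define $\widetilde T := \sigma(T^t)$, where $T^t$ is the cell-transpose of $T$ and $\sigma$ is the involution $i\leftrightarrow i'$ applied to every entry. Transposition alone interchanges ``at most one primed per row'' with ``at most one primed per column'' (and likewise for unprimed), which a priori violates the $Q$-tableau conditions; composing with $\sigma$ restores them, because swapping primes converts the ``at most one primed per column'' constraint into ``at most one unprimed per column,'' and vice versa. A direct check then yields $\wt(\widetilde T) = \wt(T)$, $P(\widetilde T) = U(T)$, and $U(\widetilde T) = P(T)$.

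The remaining obstruction is that $\widetilde T$ is weakly increasing in the \emph{swapped} alphabet order $1 < 1' < 2 < 2' < \cdots$, rather than the standard order $1' < 1 < 2' < 2 < \cdots$ required of a $Q$-tableau. To repair this I would invoke the alphabet-permutation result promised in the introduction: on shifted skew shapes that do not touch the diagonal, transposing $i$ and $i'$ in the alphabet order induces a bijection on shifted primed tableaux preserving both weight and the number of primed entries. Via the canonical correspondence, a $Q$-tableau of shape $\lambda'/\mu'$ is a shifted primed tableau of shape $(\lambda'+\delta)/(\mu'+\delta)$; since $(\mu'+\delta)_i = \mu'_i + (n-i+1) \geq 1$ for every $i \leq n$, the diagonal cell $(i,i)$ lies in the inner shape, so the skew shifted shape never touches the diagonal. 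The alphabet-permutation lemma therefore applies to $\widetilde T$ (viewed via the correspondence) and converts it into a bona fide $Q$-tableau $\Phi(T)$ of shape $\lambda'/\mu'$ with the same weight and prime count as $\widetilde T$.

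The inverse map is the evident reversal of these two steps, and summing $\mathbf{x}^{\wt(T)}t^{P(T)}r^{U(T)}$ over the domain and applying the statistic relations above yields the proposition. The transpose-and-swap step is a routine verification; the main obstacle is the alphabet-permutation lemma itself, which is the central technical input and presumably requires a jdt-based argument in the spirit of Theorem~\ref{main}, iterated once per swap $i \leftrightarrow i'$.
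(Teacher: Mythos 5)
Your proposal matches the paper's proof: the paper likewise transposes and swaps primes to land in the generalized $Q$-tableaux $\mathbf{Q}(\lambda/\mu,\mathbb{N})$ (the swapped alphabet order $1<1'<2<2'<\cdots$), and then applies the alphabet-reordering bijection of Theorem~\ref{rib} to return to ordinary $Q$-tableaux while preserving $\wt$ and $P$, exactly as you describe. The only divergence is your guess about how the reordering lemma is proved: in the paper it is not a jdt argument but an elementary cycling of entries within the connected ribbons formed by the $i$s and $i'$s, applied once per swap.
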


Before proving this, we introduce a generalization of $Q$-tableau.  Let $I \subseteq \mathbb{N}$ and define  the total order $\leq_I$ on the alphabet $\{1',1,2',2,\ldots\}$ by
\begin{enumerate}
\item{ If $i<j$ then $i <_I j$, $i<_I j'$, $i' <_I j$, $i'<_I j'$}
\item{If $i \in I$ then $i<_I i'$}
\item{If $i \notin I$ then $i'<_I i$}
\end{enumerate}

We define a generalized $Q$-tableau of shape $\lambda/\mu$ and set $I$ to be a filling of this shape using $\{1',1,2',2,\ldots\}$  such that:

\begin{enumerate}
\item{The rows and columns are weakly increasing under $\leq_I$.}
\item{No primed number appears more than once in any row.}
\item{No unprimed number appears more than once in any column.}
\end{enumerate}
The set of all such tableaux is denoted $\mathbf{Q}(\lambda/\mu,I)$.

\begin{theorem}\label{rib}
For any subsets of the natural numbers, $I$, and $I'$, there is a bijection from $\mathbf{Q}(\lambda/\mu,I)$ to $\mathbf{Q}(\lambda/\mu,I')$ which preserves $wt(T)$ and $P(T)$.
\end{theorem}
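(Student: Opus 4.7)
The plan is to decompose any change of $I$ into a sequence of single-element toggles, and to realise each toggle as a purely local bijection supported on the cells carrying $i$ or $i'$. By composing such bijections (and their inverses), it is enough to construct, for every $i\notin I$, a weight- and $P$-preserving bijection
\[
\phi_i : \mathbf{Q}(\lambda/\mu, I) \to \mathbf{Q}(\lambda/\mu, I \cup \{i\}).
\]

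Given $T\in\mathbf{Q}(\lambda/\mu,I)$, let $R(T)\subseteq\lambda/\mu$ denote the set of cells whose entry lies in $\{i,i'\}$. The first step is to establish two structural facts: (a) each row and each column of $\lambda/\mu$ intersects $R(T)$ in a contiguous interval; (b) $R(T)$ contains no $2\times 2$ sub-square. Fact (a) follows from $\leq_I$-monotonicity: a cell with entry $j$ or $j'$ with $j\neq i$ sandwiched between two $R(T)$-cells in the same row or column would need $j>i$ on one side and $j<i$ on the other. For (b), if the four cells of a $2\times 2$ square all lay in $\{i,i'\}$ then when $i\in I$ the rule ``at most one $i$ per column'' together with column $\leq_I$-monotonicity forces both bottom cells to be $i'$, violating ``at most one $i'$ per row''; the case $i\notin I$ is symmetric. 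Consequently $R(T)$ is a disjoint union of ribbons lying in pairwise disjoint rows and columns, so $\phi_i$ can be defined ribbon by ribbon, keeping every entry outside $R(T)$ fixed; constraints crossing the boundary of $R(T)$ are automatic, since the relative order of $\{j,j'\}$ versus $\{i,i'\}$ for $j\neq i$ does not depend on whether $i\in I$.

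Fix one ribbon and label its cells $c_1,c_2,\ldots,c_n$ in NE-to-SW reading order, so that each transition $c_k\to c_{k+1}$ is either a \emph{down} step (same column, next row) or a \emph{left} step (same row, previous column). A short case analysis of the local constraint at one step shows that under $\leq_I$, i.e.\ with $i\notin I$, every cell $c_k$ with $k<n$ is forced by its outgoing step --- $c_k=i$ at a left step, $c_k=i'$ at a down step --- while $c_n$ is free; whereas under $\leq_{I\cup\{i\}}$ the roles switch, with $c_k$ forced by its incoming step for $k\geq 2$ (by the same rule) and $c_1$ free. In either case the ribbon admits exactly two fillings, and the number of primes on it equals the number of down steps plus $1$ precisely when the free cell is $i'$. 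I would then define $\phi_i$ on the ribbon to send the $\leq_I$-filling whose free cell $c_n$ has value $v\in\{i,i'\}$ to the $\leq_{I\cup\{i\}}$-filling whose free cell $c_1$ equals $v$. This pairs fillings with matching prime counts ribbon by ribbon, and together with the frozen entries outside $R(T)$ preserves both $wt(T)$ and $P(T)$.

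The main obstacle is the structural analysis of $R(T)$, especially the exclusion of $2\times 2$ squares and the contiguity in rows and columns; once these hold, the ribbon-level bijection is essentially forced by the very restrictive options available at each down or left step, and weight- and prime-count preservation follow immediately from the decomposition into ribbons plus the fixed exterior.
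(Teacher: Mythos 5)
Your proposal is correct and follows essentially the same route as the paper: reduce to a single toggle $I\mapsto I\cup\{i\}$, observe that the cells containing $i$ or $i'$ form disjoint ribbons, and re-fill each ribbon while fixing everything else --- indeed, your ``move the free value from the SW end to the NE end, all other entries forced'' map coincides with the paper's one-step cyclic shift of each ribbon. The only difference is that you supply the verification details (no $2\times 2$ square, row/column contiguity, forced entries, prime-count preservation) that the paper leaves implicit.
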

\begin{proof}
It suffices to suppose that $I'=I \cup i$ for some $i \notin I$.  Let  $T \in \mathbf{Q}(\lambda/\mu,I)$ and define $\psi(T)$ as follows.  First, write down $T$. Notice that the $is$ and $i's$ in $T$ form a set of connected ribbons.  Within each of these connected ribbons, cycle every entry one position: to the left if  the box to its left is in the ribbon, downwards if the box below it is in the ribbon, or, if neither is the case, i.e., it is at the bottom left end of the ribbon, move it to the upper right end of the ribbon.   $\psi^{-1}$ is defined similarly, but by cycling the other direction.
\end{proof}

We can now prove \ref{tr}.

\begin{proof}
We seek a weight preserving bijection from $\mathbf{Q}(\lambda'/\mu',\emptyset)$ to $\mathbf{Q}(\lambda/\mu,\emptyset)$ which interchanges $P(T)$ and $U(T)$. Let $T \in \mathbf{Q}(\lambda'/\mu',\emptyset)$.  Tranpose $T$ and then prime the unprimed elements and unprime the primed elements. This gives a weight preserving bijection from $\mathbf{Q}(\lambda'/\mu',\emptyset)$ to $\mathbf{Q}(\lambda/\mu,\mathbb{N})$ which interchanges $P(T)$ and $U(T)$, and by \ref{rib}, this is sufficient.
\end{proof}

\begin{corollary}
We have the equality of skew $Q$-Schur functions $Q_{\lambda+\delta/\mu + \delta}=Q_{\lambda'+\delta/\mu' + \delta}$.
\end{corollary}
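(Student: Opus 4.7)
The plan is to obtain the corollary as a direct specialization of Proposition \ref{tr}. Recall the definition of $Q_{\lambda/\mu}^{tr}$ as a sum over all $Q$-tableaux of shape $\lambda/\mu$, weighted by $\mathbf{x}^{wt(T)} t^{P(T)} r^{U(T)}$. It was observed immediately after that definition that at $t=r=1$ this reduces to the skew $Q$-Schur function $Q_{\lambda+\delta/\mu+\delta}$, because the canonical bijection between $Q$-tableaux of shape $\lambda/\mu$ and primed shifted semistandard tableaux of shape $(\lambda+\delta)/(\mu+\delta)$ converts the generating function of the former (forgetting the $t,r$ statistics) into the standard tableau sum defining the skew $Q$-Schur function.

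With that remark in hand, I would simply set $t=r=1$ in Proposition \ref{tr}. The left-hand side $Q_{\lambda/\mu}^{tr}(\mathbf{x};1,1)$ becomes $Q_{\lambda+\delta/\mu+\delta}$, while the right-hand side $Q_{\lambda'/\mu'}^{tr}(\mathbf{x};1,1)$ becomes $Q_{\lambda'+\delta/\mu'+\delta}$. The identity in Proposition \ref{tr} is symmetric in swapping $(t,r)$ when $t=r$, so no asymmetry survives at the specialization, and the desired equality
\begin{eqnarray*}
Q_{\lambda+\delta/\mu+\delta}=Q_{\lambda'+\delta/\mu'+\delta}
\end{eqnarray*}
falls out at once.

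There is no real obstacle in this step: all the combinatorial substance has been invested earlier, in the ribbon-cycling bijection of Theorem \ref{rib} (which lets one permute the alphabet order without changing the weight or the prime count) and in the transpose-and-swap-primes bijection used in the proof of Proposition \ref{tr}. If one wanted to avoid invoking Proposition \ref{tr} and give a more self-contained argument, the hardest point would be verifying that after transposing and toggling primes one lands in $\mathbf{Q}(\lambda/\mu,\mathbb{N})$ rather than in $\mathbf{Q}(\lambda/\mu,\emptyset)$, and then using Theorem \ref{rib} to move back to the usual alphabet order; but since Proposition \ref{tr} is already available, no such work is needed here.
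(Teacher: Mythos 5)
Your argument is exactly the one the paper intends: the corollary is the specialization $t=r=1$ of Proposition \ref{tr}, combined with the earlier remark that $Q^{tr}_{\lambda/\mu}$ at $t=r=1$ equals $Q_{\lambda+\delta/\mu+\delta}$. This matches the paper's (implicit) proof, so the proposal is correct; the only detail worth adding is the paper's side condition $\lambda_1 \leq n$ ensuring the conjugate shape also fits the standing assumptions.
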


(Here we make the additional assumption that $\lambda_1 \leq n$.)

\nocite{*}
\bibliographystyle{alpha}
\bibliography{stare}

\begin{thebibliography}{HPS17}

\bibitem[AS12]{Ardilla.Serrano.2012}
Federico Ardilla and Luis Serrano.
\newblock Staircase skew {S}chur functions are {S}chur {P}-positive.
\newblock {\em J. {A}lgebraic {C}ombin.}, 36(3):409--423, 2012.

\bibitem[BS17]{Bump.Schilling.2017}
Daniel Bump and Anne Schilling.
\newblock {\em Crystal {B}ases: {R}epresentations and {C}ombinatorics}.
\newblock World Scientific, 2017.

\bibitem[Cho13]{Cho.2013}
Soojin Cho.
\newblock A new {L}ittlewood-{R}ichardson rule for {S}chur {$P$}-functions.
\newblock {\em Trans. Amer. Math. Soc.}, 365(2):939--972, 2013.

\bibitem[Hai89]{Haiman.1989}
Mark Haiman.
\newblock On mixed insertion, symmetry, and shifted {Y}oung tableaux.
\newblock {\em J. Combin. Theory Ser. A}, 50(2):196--225, 1989.

\bibitem[HPS17]{Hawkes.Paramonov.Schilling.2017}
Graham Hawkes, Kirill Paramonov, and Anne Schilling.
\newblock Crystal analysis of type {C} {S}tanley symmetric functions.
\newblock {\em Electron. J. Combin.}, 24(3):Paper P3.51, 2017.

\bibitem[Lam95]{Lam.1995}
Tao~Kai Lam.
\newblock {\em B and {D} analogues of stable {S}chubert polynomials and related
  insertion algorithms}.
\newblock ProQuest LLC, Ann Arbor, MI, 1995.
\newblock Thesis (Ph.D.)--Massachusetts Institute of Technology.

\bibitem[Ser09]{Serrano.2009}
Luis Serrano.
\newblock The shifted plactic monoid (extended abstract).
\newblock In {\em 21st {I}nternational {C}onference on {F}ormal {P}ower
  {S}eries and {A}lgebraic {C}ombinatorics ({FPSAC} 2009)}, Discrete Math.
  Theor. Comput. Sci. Proc., AK, pages 757--768. Assoc. Discrete Math. Theor.
  Comput. Sci., Nancy, 2009.

\bibitem[Sta99]{Stanley.1999}
Richard Stanley.
\newblock {\em Enumerative {C}ombinatorics {V}olume 2}.
\newblock Cambridge {U}niversity Press, 1999.

\bibitem[Ste89]{Stembridge.1989}
John~R. Stembridge.
\newblock Shifted tableaux and the projective representations of symmetric
  groups.
\newblock {\em Adv. Math.}, 74(1):87--134, 1989.

\bibitem[Wor84]{Worley.1984}
Dale~Raymond Worley.
\newblock {\em A Theory of shifted {Y}oung tableaux}.
\newblock MIT, 1984.
\newblock Thesis (Ph.D.)--Massachusetts Institute of Technology.

\end{thebibliography}

\end{document}